\documentclass[11pt,letterpaper]{article}
\usepackage{amsmath,amssymb,amsfonts,amsthm,euscript,changebar,multicol,amscd}
\usepackage{graphicx,wrapfig,subfigure,sidecap,epsfig,epstopdf}
\usepackage[table,dvipsnames]{xcolor}
\usepackage{hyperref}
\usepackage{url}
\usepackage[small,bf]{caption}
\usepackage{lastpage}
\usepackage[numbers,sort&compress]{natbib}
\usepackage{multirow}
\usepackage{cancel}
\usepackage{siunitx}
\usepackage[normalem]{ulem}
\usepackage{array}
\usepackage{varwidth}
\usepackage{lineno}
\usepackage{subcaption}    % for modern subfigures
\usepackage{hyperref}
\usepackage{soul}

\usepackage[dvipsnames]{xcolor}
\usepackage{amsthm}
\newtheorem{theorem}{Theorem}
\newtheorem{remark}{Remark}
\newtheorem{lemma}[theorem]{Lemma}
\title{A modified static PDE-based approach for multidimensional extrapolation}
\author{Hwi Lee \footnotemark[1]}

\begin{document}
\date{}
\maketitle
\renewcommand{\thefootnote}{\fnsymbol{footnote}}
\footnotetext{Key words: static PDE, extrapolation, fast sweeping method }
\footnotetext[1]{ Department of Mathematics, New York Institute of Technology,
Old Westbury, NY 11568 ({\tt E-mail: hlee50@nyit.edu}) }

\begin{abstract}

We present a modified static PDE-based extrapolation method that builds on the approach of Aslam \cite{aslam2014static} for extending smooth fields across interfaces implicitly defined as zero level sets. Our approach introduces the idea of relaxed upwinding finite differences as a key modification within the fast sweeping method. Unlike existing approaches, we require function values and their normal derivatives \emph{only} on the first grid layer inside the domain adjacent to the boundary, thereby improving computational efficiency and flexibility. To further improve accuracy, we introduce a simple boundary reconstruction technique that significantly reduces the numerical error in the extrapolated solution near the boundary. Numerical experiments indicate enhanced performance of our approach across a range of domain geometries, including those with high curvature features.

\end{abstract}

\section{Introduction}

In numerous scientific computing applications, the importance of accurately and efficiently extrapolating known function values outside their known regions is well documented. Specific instances include multiphase flows \cite{lepilliez2016two,gibou2007level}, image processing \cite{vese2002multiphase}, electrodiffusion \cite{mirzadeh2014conservative}, self-assembly of polymers \cite{ouaknin2016self}, to name only a few. In some nonlocal models such as peridynamics \cite{silling2000reformulation}, local Taylor series based extrapolation remains a common modeling choice for imposing boundary conditions, despite the nonlocal models being viewed as alternatives to partial differential equations based models for singular solutions. In addition, as a numerical subroutine,  extrapolation is essential to global numerical PDE solvers such as \cite{ruuth2008simple, kublik2013implicit} where the closest point mapping relies on constant extrapolation. 

Our present work is in the context of level set methods \cite{osher1988fronts}, which extensively employ extrapolation to construct so-called ghost point values \cite{gibou2007level}. The flexibility of the level set framework for interface problems depends on the construction of ghost values that are sufficiently smooth and consistent with the prescribed boundary conditions. Assuming a smooth interface, this can be achieved by extrapolating along normal directions, which effectively reduces the construction to a one dimensional process. To this end, Aslam \cite{aslam2004partial} proposed one of the most frequently used numerical methods, in which a sequence of linear transport equations is solved until steady state is reached. A variant of this time dependent approach is demonstrated in \cite{bochkov2020pde} to handle interfaces with kinks and high curvature. As computationally efficient alternatives, static PDE based approaches have been studied in  \cite{aslam2014static,mccaslin2014fast}, where the steady state equations are solved iteratively using the fast sweeping  \cite{zhao2005fast} and fast marching methods \cite{sethian1999fast}. We also note an implicit static approach in \cite{moroney2017extending}, based on solutions of biharmonic equations. 

We adopt the fast sweeping method approach here due to its simplicity and robustness. Our contributions consist of modifications designed to further improve efficiency and flexibility. First, we introduce the idea of relaxed upwinding finite differences which remain compatible with the fast sweeping method, although they permit some downwind differences. A key feature of our approach is its reduced data requirement: function values and normal derivatives need to be available only on the first grid layer inside the domain adjacent to the boundary. This is consistent with the structure of the analytic normal extension, where boundary data alone determines the smooth extension of function values off the interface. Beyond computational efficiency, limited data availability is intrinsic to some problem settings, for example, numerical solutions of PDEs on surfaces of co-dimesion one. We focus in this work on spatial extrapolation, but the reduced data requirement may also be useful for temporal extrapolation.

The simplest setting of scalar linear hyperbolic equations allows us to clearly demonstrate the effectiveness of the proposed relaxed upwinding as a second order discretization that is applied only locally near the boundary. While the corresponding finite difference stencil involves only a single layer of immediate interior nodes, it is not compact as it reaches further into exterior nodes away from the interface. To ensure global stability, we apply standard second order upwinding at exterior nodes that are not adjacent to the interface.  Compared to the second order compact upwinding method in \cite{benamou2010compact}, we do not require estimation of interface curvature or strong regularity assumptions on the source term; the former requirement is particularly relevant in the presence of sharp corners. In addition, no division of the local mesh into subregions is needed in our formulation, which simplifies the implementation.

Another distinguishing feature of our work is the introduction of a boundary reconstruction technique that reduces the numerical errors  of the computed solutions near the interface. Once the ghost layers are populated with numerically extrapolated values, we consider the problem of reconstructing interior values, deliberately treating them as unknowns. This is motivated by  \cite{moroney2017extending}, where the interpolation of interior values is used as a design perspective for their extrapolation technique. In contrast, we use the interior values as reference data to improve the exterior data that serve as the input to a carefully designed convolution operator. Radial symmetry is enforced in the design of the convolution, with the goal of reducing tangential noise that may be introduced by normal-based extrapolation. The construction of our reconstruction technique is guided primarily by accuracy considerations, while carefully accounting for the associated computational cost. We demonstrate, despite requiring only a small proportion of the computational cost of the original extrapolation procedure, the proposed reconstruction substantially reduces numerical errors near the boundary of the known region. In particular, when combined with quadratic extrapolation, one may obtain fourth order accuracy without computing third order normal derivatives at the boundary, which are otherwise needed for direct cubic extrapolation. We further show that the method is robust for domains with geometric singularities, provided that the underlying solution is smooth and the singularities are sufficiently mild.

The remainder of the paper is organized as follows. The proposed method is presented in detail in Section \ref{sec:2}. Numerical experiments illustrating its performance are presented in Section \ref{sec:3}. Finally, concluding remarks are provided in Section \ref{sec:4}.

\section{Numerical Method}
\label{sec:2}

We assume that a smooth domain $\Omega \subset \mathbb{R}^2$ is implicitly defined by a level set function $\phi$ with $\phi < 0$ in $\Omega$, and $\phi > 0$ in $\mathbb{R}^2 \backslash \Omega$;  a common choice of $\phi$ is the signed distance function \cite{osher2003signed}. The outward normal vector ${\mathbf{n}}$ is then given by
$$
{\mathbf{n}} = \frac{\nabla \phi}{\left|\nabla \phi \right|}
$$
which we compute in this work by the second order centered finite differences. For notational convenience, let us write $n^x$ and $n^y$ for the x- and y- components of ${\mathbf{n}}$, respectively.

Let us recall Aslam's static approach \cite{aslam2014static} to quadratic extrapolation which solves a series of stationary PDEs in the following sequential order
\begin{align}
    \mathbf{n}\cdot \nabla u_{nn} &= 0 \label{eq:first}\\
    \mathbf{n}\cdot \nabla u_{n} &= u_{nn} \label{eq:second}\\
    \mathbf{n}\cdot \nabla u &= u_n \label{eq:third}
\end{align}
in $\mathbb{R}^2 \backslash \Omega$ with the boundary data $u_{nn} = {\mathbf{n}} \cdot \nabla ({\mathbf{n}} \cdot \nabla u )$ and $u_{n} = {\mathbf{n}} \cdot \nabla u$ on $\partial \Omega$. The time dependent versions of the PDEs (\ref{eq:first}) - (\ref{eq:third}) are considered in their earlier work \cite{aslam2004partial}. We should note that for interfaces with high curvature, a modification of the static PDE based approach is recently proposed in \cite{bochkov2020pde} where the full tensors of Cartesian derivatives are extended in lieu of the normal derivatives. 

For a numerical solution of the static PDEs (\ref{eq:first}) - (\ref{eq:third}), we consider a uniform Cartesian mesh $\{(ih,jh)\}$ $(i,j)\in \mathbb{Z}^2$, where $h$ is the discretization parameter. We write $(i,j)$ to denote the grid point $(x_i,y_j)$ and classify each grid point according to its proximity to the interface. We say $(i,j)$ belongs to the first inner layer if $\phi_{i,j}  \leq 0$ but one of its four immediate neighbors $(i-1,j),(i+1,j),(i,j-1),(i,j+1)$ belongs to $\mathbb{R}^2 \backslash \Omega$. The second inner layer is defined as the set of $(i,j)$, not in the first inner layer, with $\phi_{i,j} < 0$ and one of its immediate four neighbors belonging to the first inner layer. We analogously then define the first and second outer layers.

\subsection{Least Squares Approximation of Normal Derivatives}
It is common practice in the extrapolation literature to apply centered finite difference approximations of $({\mathbf{n}} \cdot \nabla)^p u$, $p = 1, 2, \cdots$. This, in turn, means that the computed derivatives are available only on inner layers that recede progressively farther from the boundary as $p$ increases. Instead, we compute all the required normal derivatives simultaneously on the first inner layer, the outermost layer where function values are available, using a local least squares quadratic approximation.

The local least squares linear fitting in \cite{wang2019back} is based on a centered stencil involving the center grid point and its four immediate neighbors. For the quadratic case, we use the upwind 9-point stencil in \cite{coco2013finite}, which is used therein to construct a biquadratic interpolation. Rather than the exact interpolation, we instead employ a least squares quadratic fit for improved stability. More specifically, for ${n}^{x}_{i,j} > 0$ and ${n}^{y}_{i,j} > 0$, we use the points $(i+k, j+l)$, where $(k,l) \in \{0,-1,-2\}^2$. One can verify that the smallest singular value of the resulting regression matrix is bounded below by $Ch^2$ on a uniform rectangular grid. It then follows from \cite{trefethen2019approximation} that our approximation is second order accurate. For more complex boundary geometries, one may need to use a curvature dependent upwind stencil, as in \cite{gabbard2024high}.

We emphasize that the normal derivatives are computed and stored only on the first inner layer, thanks to our relaxed upwinding described in Section~\ref{sec:update}. This not only reduces the number of required computations and the depth of required interior data access, but also leads to lower memory usage, which is particularly relevant for tensor-based extrapolation \cite{bochkov2020pde}.

\subsection{Fast Sweeping Method}
The fast sweeping method is  an efficient iterative method for solving a class of Hamilton-Jacobi equations which, in their simplest forms, reduce to the linear hyperbolic PDEs (\ref{eq:first})-(\ref{eq:third}). The crux of the fast sweeping method is its enforcement of causality by upwind finite differences and Gauss-Seidel iterations with alternating sweeping orders. For a survey of the method and its full-fledged developments, we refer to \cite{zhao2016fast} and the references cited therein. We note that the one-pass type algorithms like the fast marching method \cite{sethian1999fast} are viable alternatives to the fast sweeping method.

In this work, we consider extrapolating onto the narrow band region of $O(h)$, i.e. $|\phi| = O(h)$, as it is often done in existing studies. The update equations in \ref{sec:update} are then iteratively applied in the following alternating orderings
\begin{equation}
\begin{aligned}
\text{(i) } i = 1:I, j = 1:J, & \quad \text{(ii) } i = 1:I, j = J:1 \\
\text{(iii) } i = I:1, j = 1:J, & \quad \text{(iv) } i = I:1, j = J:1
\end{aligned}
\label{eq:sweep}
\end{equation}

to cover all directions of information arrival. In light of utilizing only a single inner layer of solution data, we seek a simpler second order upwinding differencing in the first outer layer than the one proposed in \cite{benamou2010compact}.

\subsection{Upwind second order finite differences}
\label{sec:update}
For simplicity, let us present the one dimensional case for a grid point $x_i$ with $n^{x}_i>0$. The extension to two dimensional is straightforward direction-by-direction. We first assume $x_i$ belongs to the second outer layer and  follow the work \cite{aslam2014static} for a local update equation for $u_i$  given by
\begin{equation}
u_i = \frac{f_i+ n^{x}_i (2u_{i-1}-\frac{1}{2}u_{i-2})/h}{3n_i/(2h)}
\label{eq:st_uw_update}
\end{equation} where $f_i$ denotes the forcing terms on the right hand sides of (\ref{eq:first})-(\ref{eq:third}). This can be derived by applying the standard second order upwinding
\begin{equation}
(u_x)^{+}_i = \frac{3u_i-4u_{i-1} + u_{i-2}}{2h}.
\label{eq:st_uw}
\end{equation}

Our next case is when $x_i$ belongs to the first outer layer, for which we look for an approximation of
$$
(u_x)_i \approx \frac{1}{h}\sum^{2}_{j=-1} \alpha_j u_{i+j}, \qquad \alpha_j \in \mathbb{R}.
$$ In light of (\ref{eq:st_uw}), the weights $\alpha_j$ should be chosen to yield the second order accuracy. This in turn allows us to rewrite the right hand side as a linear combination of upwind, downwind and diffusive terms
\begin{equation*}
\frac{1}{h} \left( w (u_{i}-u_{i-1}) + (1-w) (u_{i+1}-u_{i}) + \left(w-\frac{1}{2}\right) (u_{i+2}-2 u_{i+1} + u_{i})\right)
\end{equation*}
in terms of a single parameter $w\in \mathbb{R}$. While $w=\frac{1}{3}$ corresponds to the maximal third order accuracy, we choose $w=1$ to annihilate the downwind differencing for the sake of causality. We hence obtain \emph{relaxed} upwinding
\begin{equation}
(u_x)^{R+}_i = \frac{1}{h} \left((u_{i}-u_{i-1}) + \frac{1}{2} (u_{i+2}-2 u_{i+1} + u_{i})\right) =  \frac{-2u_{i-1} + 3 u_{i} -2 u_{i+1} + u_{i+2}}{2h},
\label{eq:st_ruw}
\end{equation} consequently, the following update equation
\begin{equation}
u_i = \frac{f_i+ n^{x}_i (u_{i-1} + u_{i+1} - \frac{1}{2}u_{i+2})/h}{3n_i/(2h)}.
\label{eq:relax_uw}
\end{equation}
Let us make a couple of remarks. The diffusive contribution in \ref{eq:st_ruw} is deliberately centered at $i+1$ to avoid restriction to the unique second order centered approximation involving only $i-1,i,i+1$. For a 2D grid point $(i,j)$ in the first outer layer, it may be that only $(i-1,j)$ belongs to the first inner layer while $(i,j-1)$ may belong to an outer layer. In such cases, the full and relaxed upwind second order approximations are used for $y$ and $x$ derivatives, respectively. 

The presence of downwind terms in our relaxed upwinding scheme warrants a careful convergence analysis of the resulting fast sweeping method. We focus on the simplest possible $1D$ setting, for which a proof is no longer trivial, unlike in the case of the fast sweeping method based on (\ref{eq:st_uw_update}).

\begin{lemma}
Let $u(x) $ be the solution to 
$$
u^\prime(x) = f(x), \qquad x \in [0,1]
$$ subject to the boundary condition $u(0) = a, \quad a \in \mathbb{R}$. Let $(u^h)_i$ be the corresponding numerical solution at  $x_i := h{i}$, $i=1,\dots, n, \quad h = \frac{1}{n}$, obtained by applying the fast sweeping method to $$A^hu^h = f$$ where $A^h \in \mathbb{R}^{n \times n}$ is given by 
\[
\frac{1}{2h}
\begin{bmatrix}
3 & -2 & 1 \\
-4 & 3 & 0 \\
1 & -4 & 3 \\
& \ddots & \ddots & \ddots \\
&& 1 & -4 & 3
\end{bmatrix}.
\]
If $u^{h}_{m}$ is the $m$-th iterate after each full sweep (one forward and one backward sweep), then $u^{h}_{m} \to u^{h}$ as $m \to \infty$ for any initial guess $u^{h}_{0}.$

\begin{proof}
    Let $e^{h}_{m} = u^{h}_m - u^h$. Then,  
    \begin{eqnarray*}
    {e}^{h}_{m+1} &=& \underbrace{(-(D+U)^{-1} L)}_{B} \underbrace{(-(D+L)^{-1} U)} _{F} e^{h}_{m}
    \end{eqnarray*} where $B$ and $F$ denote backward and forward sweep operators. Here $D,L,U$ are the diagonal, strictly lower and strictly upper parts of $2h A^h$, respectively.
    
   We re-write the rank-1 matrix $U$ as $U = e_1 r^T$, where $e_1 = [1,0,\dots,0]^T$ and $r = [0,-2,1,0,\dots, 0]^T$. Then,  direct calculation shows 
    $$
    \rho(F) = \rho(-v r^T) = |r^T v| = |-2 v_2 + v_3|  = \frac{11}{27} < 1  $$
    where $v = \left[\frac{1}{3},\frac{4}{9},\frac{13}{27}, \dots, v_n\right]^T$ solves $(D+L)v = e_1$. Now, if we let $w= Bv$ so that 
    $
    BF = -wr^T
    $, it follows 
    $$
    \rho(BF) = \rho(-wr^T) = |r^T w| = |-2 w_2 + w_3|  =  \frac{11}{27} < 1
    $$
    where the last equality is due to solving the first three equations of $(D+U)w = -Lv$ to obtain $w = \left[ \frac{11}{81},\frac{4}{9},\frac{13}{27}, \dots, w_n \right]^T.$
\end{proof}

\end{lemma}

\subsubsection{Reconstruction near boundary}
Cubic and higher order extrapolations can be obtained by suitably extending higher order normal derivatives, but this requires stronger regularity assumptions on the domain boundary, as well as higher order discretizations for both the derivatives and the update equations. For computational efficiency, we instead seek a simple post-processing strategy that can substantially reduce numerical errors in already computed solutions near the domain boundary. This localized approach is well aligned with the popular ghost fluid method \cite{fedkiw1999non} and its subsequent developments, where extrapolated values are typically needed only in the first few outer layers.

Central to our proposed technique is the observation that the convenience of numerical extrapolation along the normal direction may introduce numerical errors in the tangential directions, particularly due to the skewed stencil associated with the relaxed upwind discretization. We hence seek an isotropic smoothing strategy. To this end, we consider the reconstruction of the \emph{known} inner layer function values from the computed outer layer values obtained through the extrapolation procedure.

We look for a discrete kernel $W = [w_{k,l}]_{1 \leq k,l \leq r}$, with $r$ odd, so that
$$
u_{i,j} = \sum_{k =1}^{r} \sum_{l=1}^{r} w_{k,l} u_{i+k-\lceil\frac{r}{2}\rceil,j+l-\lceil\frac{r}{2}\rceil} + O(h^q).
$$ We stipulate that $W$ be radially symmetric and set $w_{\lceil\frac{r}{2}\rceil,\lceil\frac{r}{2}\rceil}=0$, corresponding to the center value, which we \emph{assume} to be unknown and reconstructed. Then, setting $q = 4$ implies $r \geq 3$, but as noted in Remark \ref{rm:twobytwo}, stability considerations give $q \geq 5$. Among different choices, we set $w_{1,1}=w_{1,5}=w_{5,1}=w_{5,5} = 0$ and solve the underdetermined system
\begin{equation}
\begin{bmatrix}
8 & 4 & 4 & 4 \\
20 & 8 & 4 & 2
\end{bmatrix}
{\mathbf{w}}
=
\begin{bmatrix}
1 \\ 0
\end{bmatrix}.
\label{eq:weight}
\end{equation}
where ${\mathbf{w}} = [ w_{1,2}, w_{1,3}, w_{2,2}, w_{2,3}]^T$. To uniquely determine the weights, we solve the minimum norm problem
\begin{equation}
\min \| W \|_F^2
\label{eq:min}
\end{equation}
subject to \eqref{eq:weight}, where $\|\cdot \|_F$ denotes the Frobenius norm. A similar approach to stencil energy minimization is also used in \cite{coco2026efficient}. Although the convolution is applied only locally near the boundary, we establish its $L^2$-stability for completeness.

\begin{lemma}
Let $\lambda(k_x,k_y)$ denote the Fourier symbol given by
$$\sum_{-2 \leq j,l \leq 2} w_{j+3,l+3}e^{i(k_x j + k_y l)}$$
where $w_{i,j}$ solves (\ref{eq:min}) subject to (\ref{eq:weight}) and $w_{1,1}=w_{1,5}=w_{5,1}=w_{5,5} = 0$. Then,
$$
| \lambda(k_x,k_y) | \leq 1, \qquad (k_x,k_y)\in [-\pi,\pi]^2$$

\end{lemma}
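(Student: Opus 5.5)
The plan is to compute the weights explicitly, write the symbol as a polynomial in $X=\cos k_x$ and $Y=\cos k_y$, and then check $-1\le\lambda\le 1$ on the square $[-1,1]^2$.

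\textbf{Step 1: the weights.} Write $a=w_{1,2}$ (offsets of type $(2,1)$, 8 entries), $b=w_{1,3}$ (type $(2,0)$, 4 entries), $c=w_{2,2}$ (type $(1,1)$, 4 entries) and $d=w_{2,3}$ (type $(1,0)$, 4 entries). Then
$$\|W\|_F^2=8a^2+4b^2+4c^2+4d^2 .$$
Introduce multipliers $\lambda_1,\mu$ for the two rows of (\ref{eq:weight}). Stationarity gives
$$a=\tfrac{2\lambda_1+5\mu}{4},\quad b=\tfrac{\lambda_1+2\mu}{2},\quad c=\tfrac{\lambda_1+\mu}{2},\quad d=\tfrac{2\lambda_1+\mu}{4}.$$
Substituting into (\ref{eq:weight}) yields $10\lambda_1+17\mu=1$ and $17\lambda_1+\tfrac{71}{2}\mu=0$. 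Hence $\lambda_1=\tfrac{71}{132}$ and $\mu=-\tfrac{34}{132}$, which give
$$a=-\tfrac{7}{132},\qquad b=\tfrac{1}{88},\qquad c=\tfrac{37}{264},\qquad d=\tfrac{9}{44}.$$
I will check both constraints; for instance the weight sum is $(-56+6+74+108)/132=1$.

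\textbf{Step 2: the symbol.} Since $W$ is symmetric under $j\mapsto -j$ and $l\mapsto -l$, $\lambda$ is real:
$$\lambda=4c\cos k_x\cos k_y+2d(\cos k_x+\cos k_y)+2b(\cos 2k_x+\cos 2k_y)+4a(\cos 2k_x\cos k_y+\cos k_x\cos 2k_y).$$
Setting $X=\cos k_x$, $Y=\cos k_y$ and using $\cos 2k=2X^2-1$, this becomes a symmetric cubic $P(X,Y)$ on $[-1,1]^2$. The target is $-1\le P\le 1$ there.

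The crude triangle-inequality bound $\sum|w|=244/132>1$ fails, so a genuine argument is needed.

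\textbf{Step 3: the bound $P\le 1$.} Consistency gives $P(1,1)=1$ and $\nabla P(1,1)=0$, because the second-moment condition kills the quadratic term in $k$. I will therefore factor $1-P$ in the variables $s=1-X\ge 0$, $t=1-Y\ge 0$, where $s,t\in[0,2]$.
\begin{itemize}
\item The constant term of $1-P$ vanishes. The first-order terms in $s,t$ also vanish, since they correspond to the second moment.
\item What remains is a quadratic-plus-cubic form $Q(s,t)$.
\item I will try to show $Q\ge 0$ on $[0,2]^2$. The approach is to group terms into manifestly nonnegative pieces, such as $\alpha(s-t)^2+\beta\, st+\gamma s^2(2-s)$. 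Failing that, I will use the symmetry in $s,t$ to restrict to $s=t$ and check edges and interior critical points by one-variable calculus.
\end{itemize}

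\textbf{Step 4: the bound $P\ge -1$.} The extreme candidates are the corners and the edge midpoints. For example,
$$P(-1,-1)=4c-4d+4b-8a=\tfrac{74-108+6+56}{132}=\tfrac{28}{132}.$$
I will evaluate $P$ at all corners and then minimize along the edges $X=\pm1$, where $P$ is a cubic in $Y$. Interior critical points come from solving $\partial_X P=\partial_Y P=0$, which by symmetry reduces to the diagonal $X=Y$ plus a small residual system.

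\textbf{Main obstacle.} The hardest part is Step 3 near the degenerate maximum at $(1,1)$. There the inequality is tight to second order, so the sign of the Hessian of $Q$ must be certified exactly; that is, I must show the quadratic part of $Q$ is positive definite on the cone $s,t\ge0$. I would first try to compute that quadratic part exactly and verify it is copositive. After that, I would bound the cubic remainder on the compact square, splitting into a neighbourhood of $(0,0)$ and its complement where crude bounds suffice.
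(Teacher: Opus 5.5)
Your proposal is correct and follows the same route as the paper: the Lagrange-multiplier computation reproduces the paper's weights exactly, and the bound is then checked for the cubic in $X=\cos k_x$, $Y=\cos k_y$ on $[-1,1]^2$, with the value $1$ attained at $k_x=k_y=0$. Your plan makes explicit what the paper leaves to ``elementary trigonometric identities''. For the record, $132(1-\lambda)=50(s+t)^2+50st-56st(s+t)$, which is affine in $st$ for fixed $s+t$, so its extremes lie on the diagonal $s=t$ or on the boundary of $[0,2]^2$ (symmetry alone would not justify your restriction to $s=t$); this gives $\lambda\le 1$ at once and $\min\lambda=-1415/2046\approx -0.69$ at $(X,Y)=(-1,12/31)$.
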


\begin{proof}
Applying the standard method of Lagrange multiplier to the objective function $$\| W\|^2_{F}    = 8 (w_{1,2})^2 + 4(w_{1,3})^2+4(w_{2,2})^2+4(w_{2,3})^2$$ yields
$$
w_{1,2} = -\frac{7}{132}, w_{1,3} = \frac{1}{88}, w_{2,2} = \frac{37}{264}, w_{2,3} = \frac{9}{44}.
$$
One can then apply the change of variables $u = \cos(k_x)$ and $v = \cos(k_y)$ along with elementary trigonometric identities to show 
\[
\begin{aligned}
\left|\lambda(k_x,k_y)\right|
&=
\left| 2w_{2,3}(\cos (k_x) + \cos (k_y))
+ 4w_{2,2} \cos (k_x) \cos (k_y) \right.\\
&+ \left.2w_{1,3}(\cos (2k_x) + \cos (2k_y))
+ 4w_{1,2}\big(\cos(2k_x)\cos (k_y) + \cos (k_x) \cos(2k_y)\big) \right| \leq 1
% &  = \left| 2 w_{2,3} s t  + \right|
\end{aligned} 
\]
where the upper bound is attained when $k_x=k_y=0$, proving the claim.
\end{proof}

\begin{remark}
\label{rm:twobytwo}
The unique solution to 
\begin{equation*}
\begin{bmatrix}
4 & 4 \\
 4 & 2 \\
\end{bmatrix}
\begin{bmatrix}
 w_{2,2} \\
 w_{2,3} \\
\end{bmatrix}
=
\begin{bmatrix}
1 \\ 0 
\end{bmatrix}
\end{equation*}
yields the accuracy of $O(h^4)$, but it is no longer $L^2$-stable with $\max_{(k_x,k_y)\in [-\pi,\pi]^2}|\lambda(k_x,k_y)| = 3$.
\end{remark}

Now define
\[
N^h_{i,j}=\{(k,l)\in\mathbb{Z}^2:\ |k-i|\le2,\ |l-j|\le2,\ \text{and } w_{k-i+3,l-j+3}\neq0\},
\]
for each $(i,j)\in\mathbb{Z}^2$. Let $\Omega^h$ denote the set of all grid points in $\Omega\cup\partial\Omega$. We then define the reference zone by
\[
I^h=\{(i,j)\in\Omega^h:\ N^h_{i,j}\not\subset\Omega^h\},
\]
and the corresponding {refinement zone} by
\[
R^h=\{(i,j)\in\mathbb{Z}^2:\ (i,j)\in N^h_{p,q}\text{ for some }(p,q)\in I^h\}.
\]
Our goal is to compute a correction $\delta^h u$, supported on the refinement zone $R^h$, to the current approximation $u^h$. We formulate this as the constrained minimization problem
\begin{equation}
\min \|\delta^h u\|_{L^2}^2,
\label{eqn:min_norm}
\end{equation}
subject to
\begin{equation}
(W\star (u^h+\delta^h u))_{i,j}=u_{i,j},\qquad (i,j)\in I^h.
\label{eqn:correction}
\end{equation}

The width of the outer narrow band used to compute $u^h$ should be chosen sufficiently large to include the refinement zone, which depends only on $h$ and is independent of the exact values of $\phi$. For each $(i,j)\in I^h$, at least one grid point in $N^h_{i,j}$ belongs to the refinement zone, hence the constraint \eqref{eqn:correction} is generally underdetermined. Consistent with analytical considerations, we observe in all numerical experiments that the feasible set of the constraint is nonempty. Our definition of the reference zone is intended to incorporate as much interior information as possible into the system \eqref{eqn:correction}.

Given the current approximation $u^h = O(h^p)$, where $1 \leq p \leq 3$, the correction $\delta^h$ is expected to be of comparable magnitude, $\delta^h = O(h^p)$, since it is obtained as the minimum-norm correction satisfying the reconstruction constraints. Although our approach does not necessarily increase the order of accuracy of the corrected approximation, it is designed to avoid degrading the accuracy of the original approximation.

\section{Numerical Experiments}
\label{sec:3}
In all our experiments, numerical errors are measured in $L^\infty$ norm unless stated otherwise. The stopping criteria for the fast sweeping method is  
$$
\|u^h_{m+1}-u^h_{m}\|_\infty < 10^{-9}
$$
We solve the constrained minimization  (\ref{eqn:min_norm})-(\ref{eqn:correction}) by applying \textsc{matlab} built-in solver \texttt{lsqminnorm}. 

\subsection{Example 1}
As our first benchmark test case, we demonstrate that the field values in the \emph{single} first inner layer  can be constantly extended with third order accuracy. We consider $f(x,y) = \sin(\arctan(y/x))$ and the signed distance function $\phi(x,y) = \sqrt{x^2+y^2}-2$ over the computational domain $[-\pi,\pi]^2$. The accuracy of the constant extrapolation is reported in Table \ref{tb:ex1}.

\begin{table}[htbp]
\centering
\begin{tabular}{|c|c|c|c|c|}
\hline
\multicolumn{1}{|c|}{Mesh} 
& \multicolumn{1}{c|}{$101\times101$}
& \multicolumn{1}{c|}{$201\times201$}
& \multicolumn{1}{c|}{$401\times401$}
& \multicolumn{1}{c|}{$801\times801$} \\
\hline
Error & 5.240E-5 & 6.603E-6 & 9.292E-7 & 1.184E-7 \\
\hline
Conv.\ order & --- & 2.988 & 2.829 & 2.973 \\
\hline
\end{tabular}
\caption{Numerical accuracy of constant extrapolation in a narrow band of width 
$3h$ for Example 1}
\label{tb:ex1}
\end{table}

\subsection{Example 2} 

We revisit Example~1 of \cite{aslam2004partial}, using the same setup as in Example~1, except that we now consider the function
$f(x,y) = \sin(x)\cos(y)$. We compute constant, linear, and quadratic extrapolations, where the required normal derivatives are computed and stored only on the first inner  layer. The results are reported in Table~\ref{tb:ex2}, and demonstrate convergence rates consistent with the expected order of accuracy. We further note that the number of iterations remains uniformly bounded under mesh refinement. The reported CPU times (in seconds) correspond to a MATLAB implementation on a personal laptop and are comparable with those reported in Table~2 of \cite{aslam2014static}.

Next, we examine the accuracy improvement achieved by the proposed reconstruction technique, as summarized in Table~\ref{tb:ex2_hfilter}. With a very small increase in computational cost, the resulting errors are reduced significantly, leading to an increase in the observed order of accuracy.

\begin{table}[!t]
\centering
\begin{tabular}{|c|c|c|c|c|}
\hline
\multicolumn{1}{|c|}{Mesh} 
& \multicolumn{1}{c|}{$201\times201$}
& \multicolumn{1}{c|}{$401\times401$}
& \multicolumn{1}{c|}{$801\times801$}
& \multicolumn{1}{c|}{$1601\times1601$} \\
\hline
\multicolumn{5}{|c|}{{Constant extrapolation}} \\
\hline
$L^\infty$ error & 8.072E-2 & 3.999E-2 & 2.031E-02 & 1.040E-02 \\
\hline
Conv.\ order & --- & 1.013 & 0.978 & 0.966 \\
\hline
\# Iter & 10 & 9 & 9 & 9 \\
\hline
CPU time & 0.0168 & 0.0314 & 0.0752 & 0.222 \\
\hline
\multicolumn{5}{|c|}{{Linear extrapolation}} \\
\hline
$L^\infty$ error & 3.327E-3 & 1.008E-3 & 3.350E-4 & 8.387E-5 \\
\hline
Conv.\ order & --- & 1.723 & 1.589 & 1.998 \\
\hline
\#  Iter & 10, 9 & 10, 8 & 9, 7 & 9, 6 \\
\hline
CPU time & 0.0300 & 0.0572 & 0.138 & 0.358 \\
\hline
\multicolumn{5}{|c|}{{Quadratic extrapolation}} \\
\hline
$L^\infty$ error & 7.986E-4 & 1.006E-4 & 1.262E-5 & 1.580E-6 \\
\hline
Conv.\ order & --- & 2.989 & 2.994 & 2.997 \\
\hline
\#  Iter & 10, 9, 8 & 10, 8, 8 & 9, 7, 7 & 9, 6, 6 \\
\hline
CPU time & 0.0507 & 0.111 & 0.265 & 0.762 \\
\hline
\end{tabular}
\caption{Numerical accuracy of constant, linear and quadratic extrapolation in the narrow band of size $3h$ for Example 2}

\label{tb:ex2}
\end{table}

\begin{table}[!t]
\centering
\begin{tabular}{|c|c|c|c|c|c|}
\hline
Mesh 
& $200\times200$ 
& $400\times400$ 
& $800\times800$ 
& $1600\times1600$ 
& $3200\times 3200$\\
\hline

\multicolumn{6}{|c|}{Constant Extrapolation} \\
\hline
$L^\infty$ error & 2.409E-2 & 1.064E-2 & 3.829E-3 & 1.052E-3 & 2.674E-4 \\
\hline
 Conv. order &  ---& 1.179 & 1.474 & 1.866 & 1.974 \\
\hline
{Extra CPU time} & 4.950E-4 & 5.657E-4 & 8.118E-4 & 1.173E-3 & 1.627E-3 \\
\hline
\multicolumn{6}{|c|}{Linear Extrapolation} \\
\hline
$L^\infty$ error & 9.167E-4 & 2.870E-4 & 2.792E-5 & 1.155E-5 & 1.322E-6 \\
\hline
 Conv. order &  ---& 1.676 & 3.362 & 1.273 & 3.127 \\
\hline
{Extra CPU time} & 5.382E-4 & 1.780E-3 & 1.350E-3& 2.835E-3 & 3.104E-3 \\
\hline
\multicolumn{6}{|c|}{Quadratic Extrapolation} \\
\hline
$L^\infty$ error & 1.579E-4 & 1.876E-5 & 1.810E-6 & 1.280E-7 & 8.147E-9 \\
\hline
 Conv. order &  ---& 3.073 & 3.373 & 3.823 & 3.973 \\
\hline
{Extra CPU time} & 4.521E-3 & 2.225E-3 & 2.650E-3 & 2.181E-3 & 2.440E-3 \\
\hline
\end{tabular}
\caption{Numerical Errors after applying the reconstruction technique in the refinement zone for Example 2}
\label{tb:ex2_hfilter}
\end{table}

\subsection{Example 3}
We follow \cite{moroney2017extending} and consider the union of two circular domains whose boundary is defined as the zero level set of
\[
\phi(x, y) = \min\left(\sqrt{(x-0.8)^2 + y^2}-1,,
\sqrt{(x+0.8)^2 + y^2}-1\right).
\]
Table 2 of their work reports that, despite the presence of kinks, linear and quadratic extrapolations based on the time dependent approach of \cite{aslam2004partial} exhibit second and third order convergence, respectively. In our static approach, we recover the same convergence rates when the normal vector $\mathbf{n}$ is not  normalized numerically. With the normalized $\mathbf{n}$, the convergence reduces by one order in both cases, but the proposed boundary reconstruction restores the expected convergence rates (Figure \ref{fig:ex3_yes_filter}).

\begin{figure}[!t]
    \centering
    \includegraphics[width=0.6\linewidth]{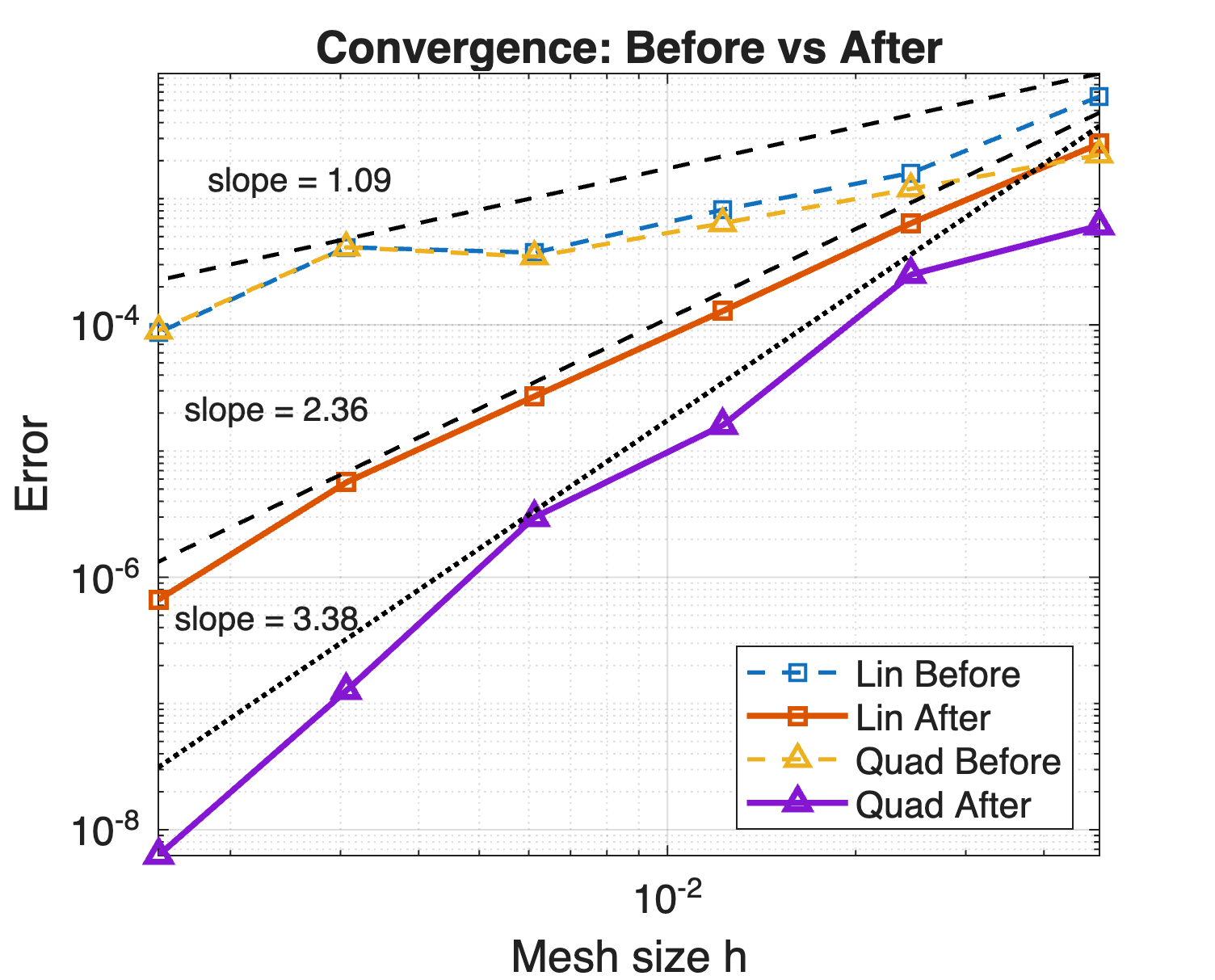} 
    \caption{Comparison of convergence rates of linear and quadratic extrapolation for Example 3 in the refinement zone before and after reconstruction}
    \label{fig:ex3_yes_filter}
\end{figure}

\subsection{Example 4}
We further demonstrate the robustness of our reconstruction technique by considering the numerical experiment from \cite{bochkov2020pde}. Similarly to Example 3, we consider the domain defined as the union of two circular regions within the unit square $[0,1]^2$, given by
$$
\phi(x, y) = \min\left(\sqrt{(x+0.1)^2 + (y+0.3)^2}-0.501,\;
                   \sqrt{(x-0.2)^2 + (y-0.2)^2}-0.401\right).
$$

As shown in Figure \ref{fig:ex4_convergence_ref}, both the linear and quadratic extrapolations, when applied as standalone methods, exhibit approximately first order convergence. In contrast, the proposed reconstruction technique restores the expected convergence rates in both cases, despite the similar magnitude of the pre-processed errors. Compared with the weighted Cartesian approach of \cite{bochkov2020pde}, our approach achieves comparable accuracy at a reduced computational cost, as it does not require tensor valued quantities.

\begin{figure}[!t]
    \centering
    \includegraphics[width=0.6\linewidth]{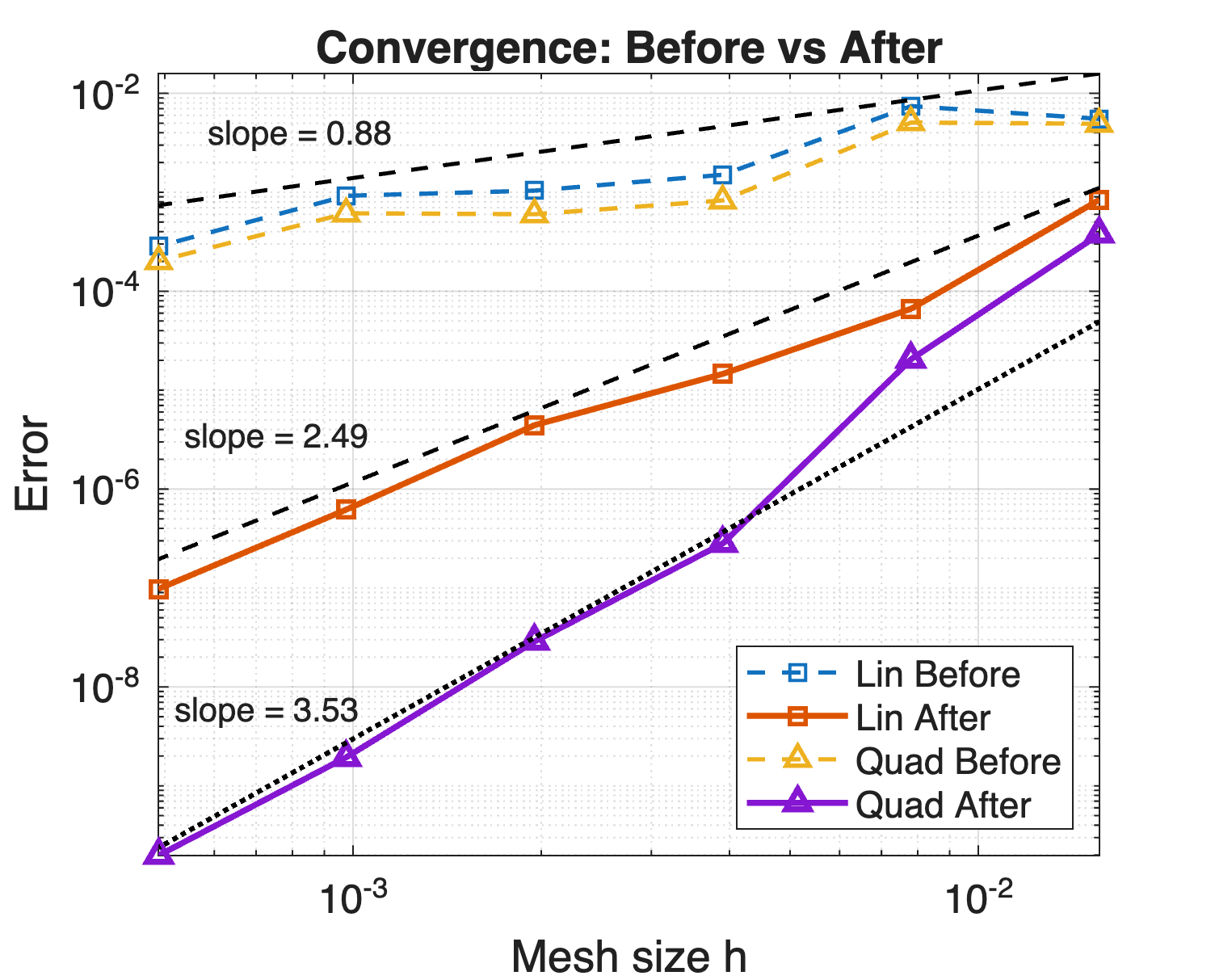} 
    \caption{Comparison of convergence rates for Example 4 in the refinement zone before and after applying the reconstruction technique}
    \label{fig:ex4_convergence_ref}
\end{figure}

\subsection{Example 5}
As our final numerical test, we consider a moon shaped domain defined by
$$
\phi(x,y) = \max\left(\sqrt{x^2 + y^2}-0.501,
-(\sqrt{(x-0.4)^2 + (y-0.3)^2}-0.401)\right),
$$
following the construction in \cite{bochkov2020pde}. The boundary of the domain contains two kinks at the intersections of its convex and concave segments. Near these kinks, some of the upwind neighbors of first inner layer grid points lie outside $\Omega$, reducing the accuracy of the local least squares fitting (Figure~\ref{fig:ex5_stencil}). As illustrated in the same figure, this issue can be addressed by relaxing the upwind neighborhood of $(i,j)$ to include all grid points belonging to the symmetric stencil
$$
\{(k,l): |k-i| \leq 3,\ |l-j| \leq 3\}
$$
that also satisfy the upwind condition
\[ [x(k,l)-x(i,j),\,y(k,l)-y(i,j)] \cdot [\phi_x(i,j),\,\phi_y(i,j)] \leq 0 \]

\begin{figure}[!t]
\centering
\includegraphics[width=0.495\textwidth]{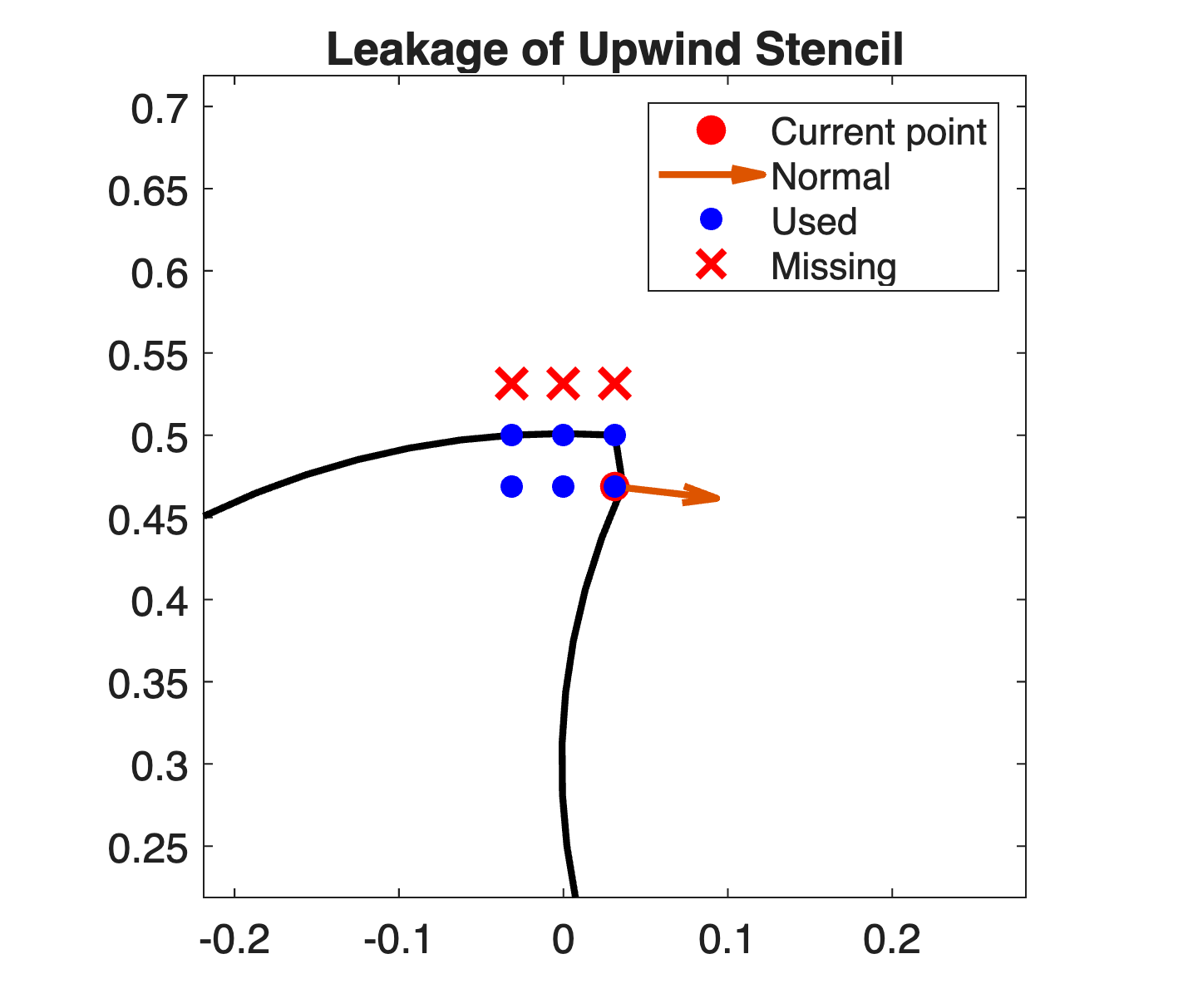}
\hspace{-0.01\textwidth}
\includegraphics[width=0.495\textwidth]{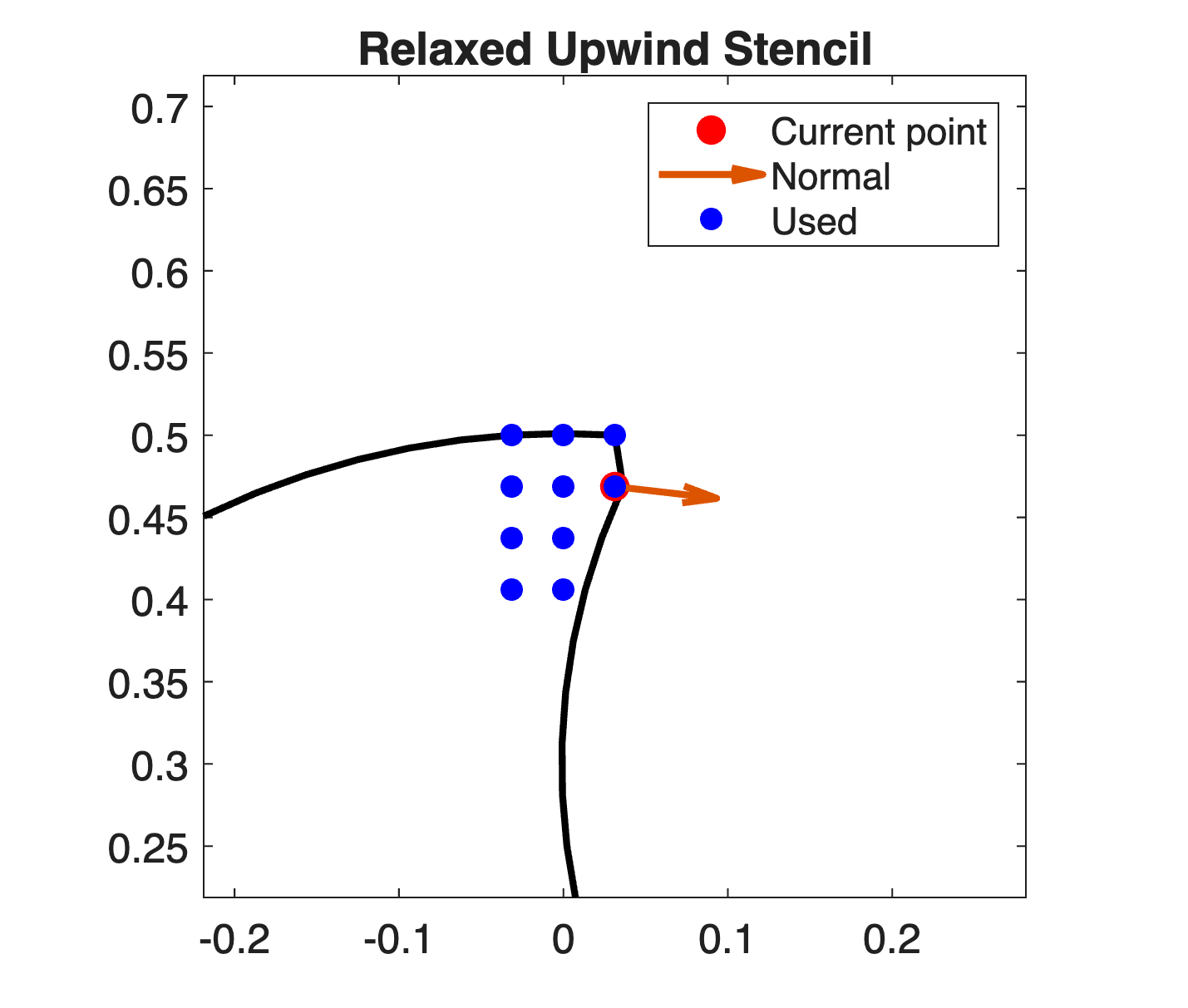}
\caption{Illustration of missing upwind neighbors (left) and the relaxed upwind stencil (right) for the first inner layer grid point $\left(\frac{1}{32}, \frac{15}{32}\right)$ in Example~5.}
\label{fig:ex5_stencil}
\end{figure}

Figure \ref{fig:ex5_convergence} shows that both the linear and quadratic extrapolation methods achieve first order convergence; our reconstruction technique is only accuracy-preserving. This can be attributed to the strong singularities at the kinks, where the normal derivative is discontinuous and the curvature changes sign. For comparison, the figure also includes the weighted Cartesian derivative-based extrapolation of \cite{bochkov2020pde}, which recovers the expected first and second order convergence rates for the linear and quadratic extrapolation methods, respectively.

A closer examination of the effect of our reconstruction technique for the weighted Cartesian approach is provided in Figure \ref{fig:ex5_refinement_analysis}. The $L^\infty$ error is reduced by at least $30\%$. More notably, we report the improved convergence rate of the $L^2$ error normalized by the number of grid points in the refinement zone. The increase in order by $0.5$ for both the linear and quadratic cases indicates a significant reduction in errors away from the kinks after applying the reconstruction technique. We emphasize that these improvements are achieved with only small additional computational cost.

\begin{figure}[!t]
    \centering
    \includegraphics[width=0.493\textwidth]{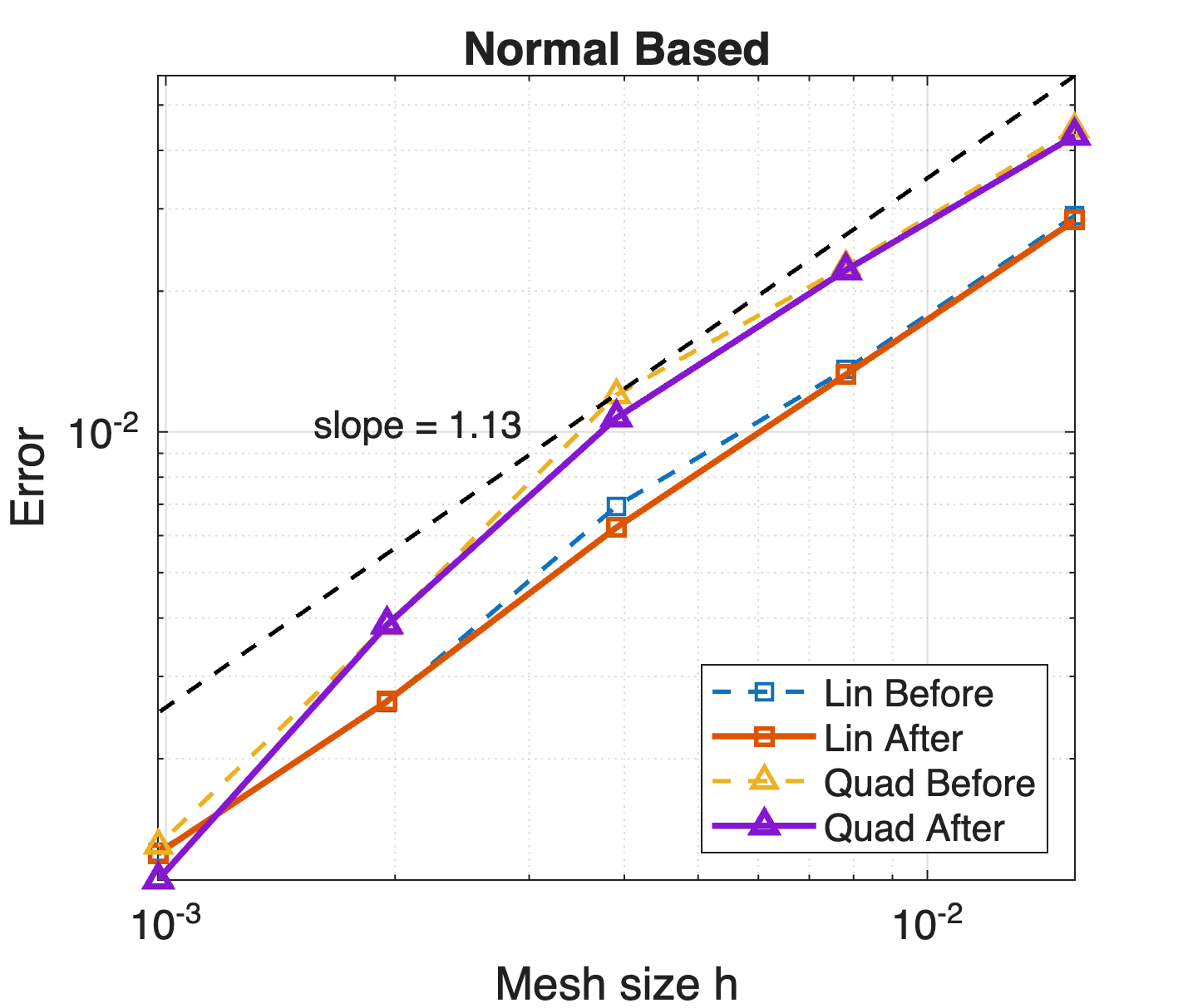}
    \hspace{-0.001\textwidth}
    \includegraphics[width=0.493\textwidth]{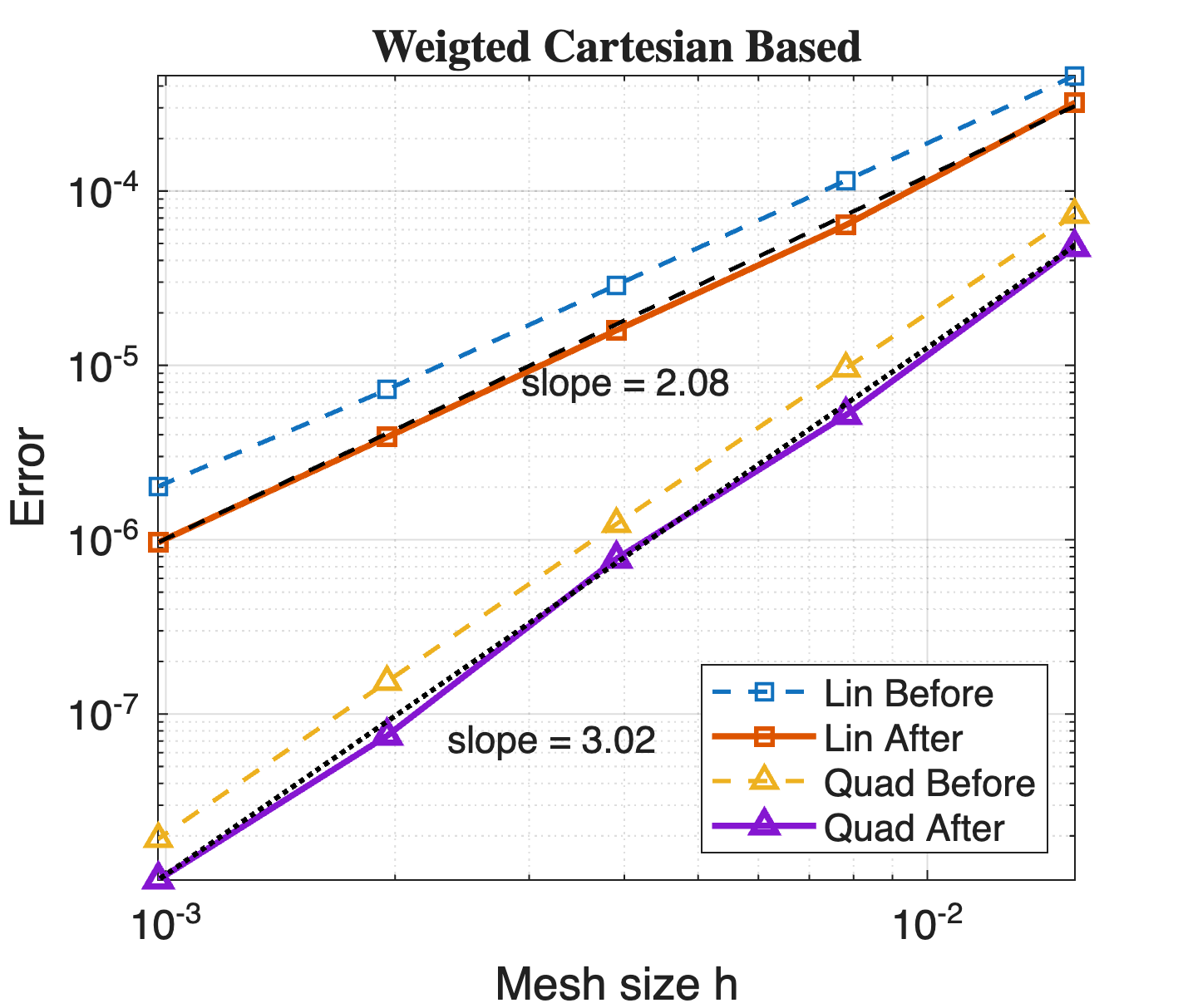}
    \caption{Convergence plots for Example 5 in the refinement zone \textbf{Left:} normal derivatives-based extension by \cite{aslam2014static}. \textbf{Right:} weighted Cartesian derivatives-based extension by \cite{bochkov2020pde}.}
    \label{fig:ex5_convergence}
\end{figure}

\begin{figure}[!t]
    \centering
    \includegraphics[width=0.493\textwidth]{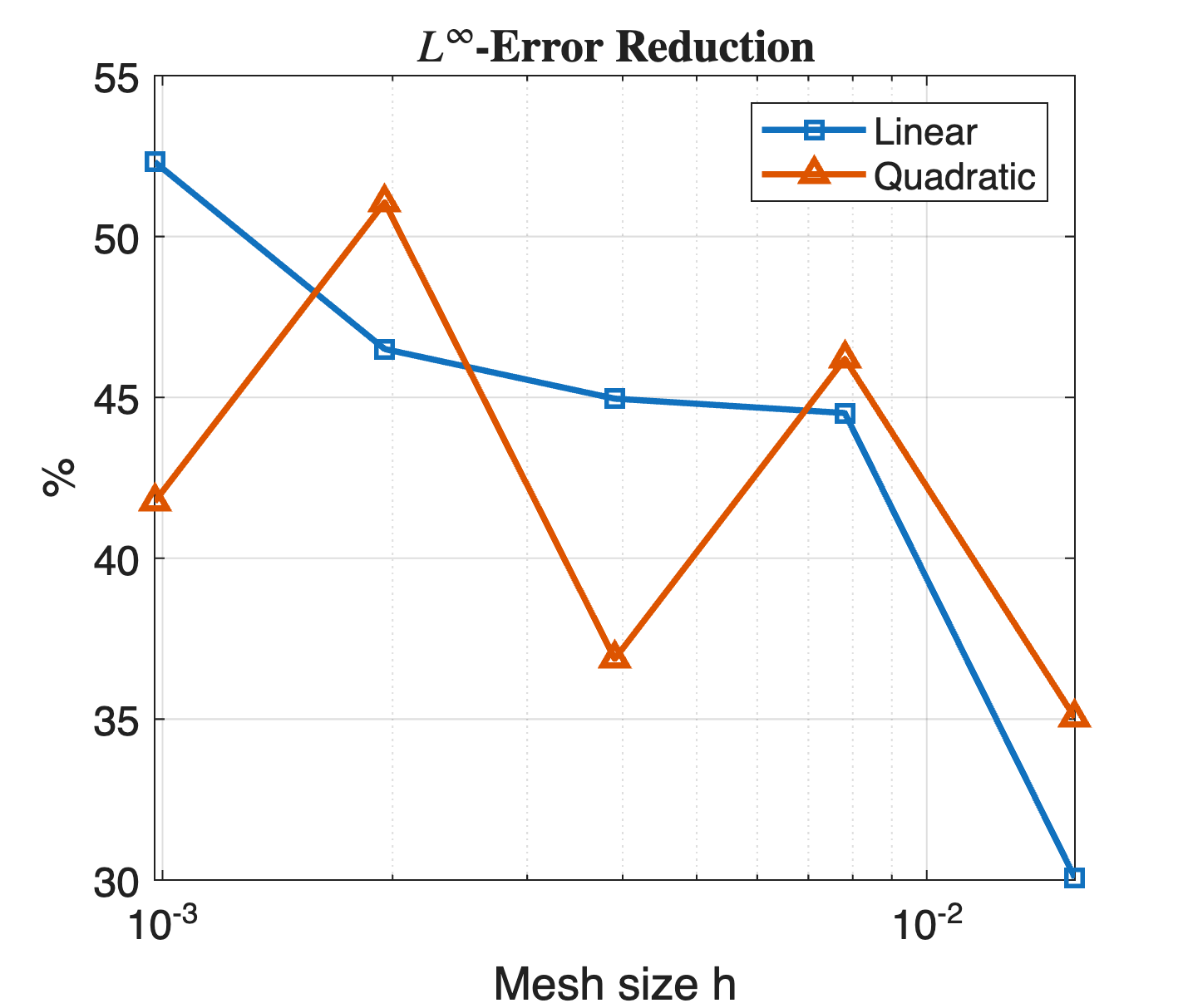}
    \hspace{-0.001\textwidth}
    \includegraphics[width=0.493\textwidth]{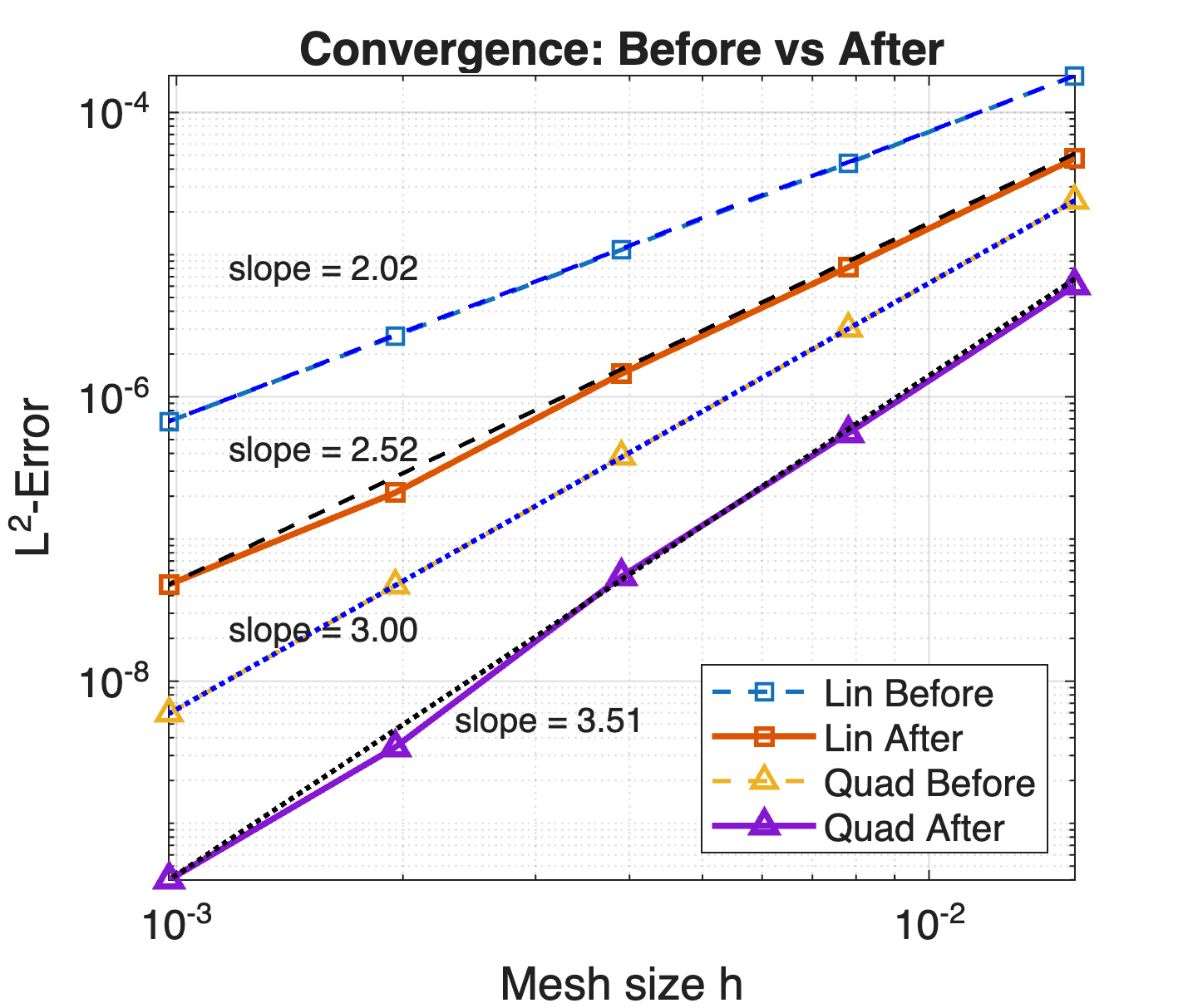}
    \caption{Error Reduction by the reconstruction technique in the refinement zone for Example 5. }
    \label{fig:ex5_refinement_analysis}
\end{figure}

\section{Conclusion}
\label{sec:4}

We propose two techniques, relaxed upwinding finite differences and a boundary reconstruction technique, to enhance the efficiency and accuracy of PDE based extrapolation. Both techniques are inspired by introducing an additional degree of freedom, which provides stability while maintaining the desired accuracy. In addition to their simplicity, the proposed techniques eliminate the need to compute and store normal derivative data on computational nodes that are not immediately adjacent to the interface. The effectiveness of the resulting extrapolation approach is demonstrated through numerical examples involving both regular and irregular domain boundary geometries.

\section*{Acknowledgment} The author would like to thank Yingjie Liu for carefully reading this paper and providing valuable comments.
%%%%%%%%%%%
%%%%%%%%%%%
\newpage
\clearpage
\bibliographystyle{plain}
\addcontentsline{toc}{section}{\refname}
\bibliography{references}

\end{document}